\newtheorem{theo}{Theorem}[section]
\newtheorem{lemm}[theo]{Lemma}
\numberwithin{equation}{section}
\newcommand{\bal}{\begin{align}}
\newcommand{\bbal}{\begin{align*}}
\newcommand{\beq}{\begin{equation}}
\newcommand{\eeq}{\end{equation}}
\newcommand{\bca}{\begin{cases}}
\newcommand{\eca}{\end{cases}}
\newcommand{\na}{\nabla}
\newcommand{\De}{\Delta}
\newcommand{\cd}{\cdot}
\newcommand{\dd}{\mathrm{d}}
\newcommand{\R}{\mathbb{R}}
\begin{document}

\subjclass[2010]{76W05}
\keywords{Hall-MHD equations, continuous dependence.}

\title[The continuous dependence for the Hal-MHD equations]{The continuous dependence for the Hal-MHD equations with fractional magnetic diffusion}

\author[D. Zhong]{Dingxing Zhong}
\address{School of Mathematics and Computer Sciences, Gannan Normal University, Ganzhou 341000, China}
\email{zhongdingxing678@sina.com}

\author[J. Li]{Jinlu Li}
\address{School of Mathematics and Computer Sciences, Gannan Normal University, Ganzhou 341000, China}
\email{lijinlu@gnnu.cn}

\author[X. Wu]{Xing Wu}
\address{College of Information and Management Science, Henan Agricultural University, Zhengzhou 450002, China}
\email{ny2008wx@163.com}

\begin{abstract}
In this paper we show that the solutions to the incompressible Hall-MHD system with fractional magnetic diffusion depend continuously on the initial data in $H^s(\mathbb{R}^d)$, $s>1+\frac{d}{2}$.
\end{abstract}

\maketitle

\section{Introduction and main result}

In this paper, we are concerned with the study on the Cauchy problem for the incompressible Hall-MHD system with fractional magnetic diffusion:
\beq\label{MHD1}\begin{cases}
\partial_tu+u\cdot\nabla u+\nabla P=b\cdot \nabla b, \\
\partial_tb+u\cdot \nabla b+\nabla \times((\nabla \times b)\times b)+(-\De)^{\alpha} b=b\cdot\nabla u,\\
\mathrm{div} u=\mathrm{div} b=0,\quad (u,b)|_{t=0}=(u_0,b_0),
\end{cases}\eeq
where $u$ and $b$ represent the flow velocity vector and the magnetic field vector respectively, $P$ is a scalar pressure, while $u_0(x)$ and $b_0(x)$ are the given initial velocity and initial magnetic field with $\nabla\cdot u_0=\nabla\cdot b_0=0$. The Hall term $\nabla\times((\nabla \times b)\times b)$ plays an important role in magnetic reconnection which is happening in the case of large magnetic shear. The fractional Laplacian operator $(-\Delta)^\alpha$ is defined in terms of the Fourier transform by $\widehat{(-\Delta)^\alpha}f(\xi)=|\xi|^{2\alpha}\hat{f}(\xi)$.

The applications of the Hall-MHD system cover a very wide range of physical objects, such as, in the magnetic reconnection in space plasmas, the star formation, neutron stars and geo-dynamos.

Recently, there are many researches on the standard Hall-MHD equations with  $-\Delta u$ and $-\Delta b$ , concerning global weak solutions \cite{1Chae 2014}, local and global (small) strong solutions \cite{2Chae 2014, Wu 2018}, and the large time behavior of weak and strong solutions \cite{Wan 2015, Chae 2013, 1Weng 2016, 2Weng 2016}.
For the system \eqref{MHD1},  Chae, Wan and Wu \cite{3Chae 2014}  proved the local existence and uniqueness of the solution to the Hall-MHD equations with only a fractional Laplacian magnetic diffusion $(-\Delta)^\alpha b$ in the space $H^s(\mathbb{R}^d)$ for $s>1+\frac{d}{2}$ and $\alpha>\frac{1}{2}$. To the knowledge of the author, there are few results on continuous dependence upon the initial data for the Hall-MHD system. Therefore, the purpose of our paper is to discuss the continuous dependence of solutions on the initial data in \eqref{MHD1} and give a precise estimate.

Our argument can state as follows:
\begin{theo}\label{th1}
Let $d\geq 2$, $\alpha\geq 1$ and $s>1+\frac d2$. Suppose that $u^n_0\in H^s(\R^d)$ goes to $u_0\in H^s(\R^d)$ in $H^s(\R^d)$ and $b^n_0\in H^s(\R^d)$ goes to $b_0\in H^s(\R^d)$ in $H^s(\R^d)$ when $n$ goes to infinity. Then there exists a positive $T>0$ independent of $n$ such that  $(u^n,b^n)\in \mathcal{C}([0,T];H^s(\R^d))$ be the solution of
\bal\label{MHD2}\begin{cases}
\partial_tu^n+u^n\cdot\nabla u^n+\nabla P_n=b^n\cdot \nabla b^n, \\
\partial_tb^n+u^n\cdot \nabla b^n+\nabla \times((\nabla \times b^n)\times b^n)+(-\De)^{\alpha} b^n=b^n\cdot\nabla u^n,\\
\mathrm{div} u^n=\mathrm{div} b^n=0,\quad (u^n,b^n)|_{t=0}=(u^n_0,b^n_0),
\end{cases}\end{align}
and $(u,b)\in \mathcal{C}([0,T];H^s(\R^d))$ be the solution of \eqref{MHD1} with initial data $(u_0,b_0)$. Moreover, there holds
\bbal
\lim_{n\rightarrow \infty}\big(||u^n-u||_{L^\infty_T(H^s(\R^d))}+||b^n-b||_{L^\infty_T(H^s(\R^d))}\big)=0.
\end{align*}
\end{theo}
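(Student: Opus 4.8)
The plan is to combine the uniform local well-posedness of \cite{3Chae 2014} with a Bona--Smith mollification argument, the standard device for promoting convergence in a weak norm to convergence in the strong norm $H^s$. I would first record that, since $(u^n_0,b^n_0)\to(u_0,b_0)$ in $H^s(\R^d)$, the data are uniformly bounded in $H^s$, so the local theory supplies a single existence time $T>0$, independent of $n$, together with a uniform bound $\|(u^n,b^n)\|_{L^\infty_T(H^s)}+\|(u,b)\|_{L^\infty_T(H^s)}\le M$. This reduces the theorem to a quantitative stability estimate on $[0,T]$ with all constants independent of $n$.

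The obstruction to a direct approach is a loss of derivatives. Writing the equations for $v=u^n-u$ and $w=b^n-b$ and testing with $\Lambda^s v,\Lambda^s w$ (where $\Lambda^s=(I-\De)^{s/2}$), the transport nonlinearities are tamed by the divergence-free cancellation $\int(u\cd\na\Lambda^s v)\cd\Lambda^s v=0$ and Kato--Ponce commutators, but the terms $v\cd\na u$ and, far more seriously, the second-order Hall term produce contributions in which one or two derivatives fall on the reference solution, which is only known to lie in $H^s$. The decisive structural fact is the identity
\bbal
\na\times((\na\times b)\times b)=b\cd\na(\na\times b)-(\na\times b)\cd\na b,
\end{align*}
which exhibits the dangerous part as a transport of the current $\na\times b$ along $b$; rewriting the reference-heavy piece as a divergence (using $\Div w=0$) and integrating by parts transfers a top-order derivative onto the difference, at the cost of a factor $\|w\|_{\dot H^{s+1}}$ that is absorbed by the dissipation. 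This is exactly where the hypothesis $\alpha\ge1$ is used: $(-\De)^\alpha b$ supplies the coercive term $\|w\|_{\dot H^{s+\alpha}}^2\ge\|w\|_{\dot H^{s+1}}^2$ needed to control the Hall contribution.

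To circumvent the remaining loss on the reference solution I would interpose mollified solutions. Let $S_j$ be the frequency truncation to $\{|\xi|\les 2^j\}$, let $(u^{n,j},b^{n,j})$ solve \eqref{MHD2} from data $S_j(u^n_0,b^n_0)$, and let $(u^j,b^j)$ solve \eqref{MHD1} from $S_j(u_0,b_0)$; then
\bbal
\|u^n-u\|_{H^s}\le\|u^n-u^{n,j}\|_{H^s}+\|u^{n,j}-u^j\|_{H^s}+\|u^j-u\|_{H^s},
\end{align*}
and likewise for $b$. For the middle term $j$ is fixed: $S_j(u^n_0,b^n_0)\to S_j(u_0,b_0)$ in every $H^\sigma$, both reference solutions enjoy uniform (in $n$) $H^{s+1}$ bounds, so the dangerous terms are now controlled and a direct $H^s$ energy estimate closes, giving $\|u^{n,j}-u^j\|_{H^s}+\|b^{n,j}-b^j\|_{H^s}\to0$ as $n\to\infty$.

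The heart of the matter, and the step I expect to be the main obstacle, is the uniform Bona--Smith convergence $\|u^n-u^{n,j}\|_{H^s}+\|b^n-b^{n,j}\|_{H^s}\to0$ as $j\to\infty$. The quantitative inputs are the smoothing bound $\|S_j f\|_{H^{s+1}}\les2^{j}\|f\|_{H^s}$ and the refined tail estimate $\|f-S_jf\|_{H^{s-1}}\le2^{-j}\ep_j(f)$ with $\ep_j(f)=\big(\sum_{k>j}2^{2sk}\|\De_kf\|_{L^2}^2\big)^{1/2}\to0$. A clean low-norm estimate in $H^{s-1}$ (which loses no derivative, since $\|\na u\|_{H^{s-1}}=\|u\|_{H^s}$ is available and the Hall term is tamed by the divergence form and dissipation above) yields $\|u^n-u^{n,j}\|_{H^{s-1}}+\|b^n-b^{n,j}\|_{H^{s-1}}\les2^{-j}\ep_j$. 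Feeding this together with the $2^j$ high-frequency bound on the mollified reference solution into the $H^s$ estimate, each offending product collapses to $2^{j}\cd2^{-j}\ep_j=\ep_j$ (then a Young inequality against the dissipation), so Gronwall gives a bound $\les(\ep_j+\ep_j T)e^{CMT}\to0$. Finally, because the convergent family $\{u^n_0,b^n_0\}_n\cup\{u_0,b_0\}$ is precompact in $H^s$, the tails satisfy $\sup_n\ep_j\to0$, so this convergence is uniform in $n$. Choosing $j$ large to make the outer terms small uniformly in $n$, then $n$ large to kill the middle term, yields the claim; the delicate point throughout is ensuring the fractional dissipation genuinely absorbs the Hall term at every regularity level while keeping all constants independent of $n$.
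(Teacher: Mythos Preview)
Your proposal is correct and follows essentially the same Bona--Smith scheme as the paper: a uniform local existence time via the $H^s$ a priori estimate, the three-term split through mollified solutions $(u^{n,j},b^{n,j})$ and $(u^j,b^j)$ with data $S_j(u^n_0,b^n_0)$ and $S_j(u_0,b_0)$, an $H^{s-1}$ difference estimate feeding into an $H^s$ difference estimate whose extra term carries $\|(\cdot)\|_{H^{s+1}}$ on the mollified reference (bounded by $2^j$) against the $H^{s-1}$ difference (bounded by $2^{-j}\ep_j$), so that the product collapses to the $H^s$ tail of the data. The paper encodes the commutator/Hall analysis via Littlewood--Paley blocks (its Lemma~3.2) rather than the Kato--Ponce formulation you sketch, and handles the uniform-in-$n$ tail by the triangle inequality $\|(\mathrm{Id}-S_j)u^n_0\|_{H^s}\le\|u^n_0-u_0\|_{H^s}+\|(\mathrm{Id}-S_j)u_0\|_{H^s}$ rather than precompactness, but these are cosmetic differences.
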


\noindent\textbf{Notations.} Given a Banach space $X$, we denote its norm by $\|\cdot\|_{X}$. Since all spaces of functions are over $\mathbb{R}^d$, for simplicity, we drop  $\mathbb{R}^d$ in our notations of function spaces if there is no ambiguity. The symbol $A\lesssim B$ denotes that there exists a constant $\bar{c}_0>0$ independent of $A$ and $B$, such that $A\leq \bar{c}_0 B$. We denote by $\{c_j\}_{j\geq -1}$ a sequence such that $||c_j||_{\ell^2}\leq 1$.

\section{Preliminaries}

In this section we collect some preliminary definitions and lemmas. For more details we refer the readers to \cite{B.C.D}. \\

Let $\chi: {\mathbb R}^d\to [0, 1]$ be a radial, non-negative,
smooth and radially decreasing function which is supported in $\mathcal{B}\triangleq \{\xi:|\xi|\leq \frac43\}$ and
$\chi\equiv 1$ for $|\xi|\leq \frac54$. Let $\varphi(\xi)=\chi(\frac{\xi}{2})-\chi(\xi)$. Then $\varphi$ is supported in the ring $\mathcal{C}\triangleq \{\xi\in\mathbb{R}^d:\frac 3 4\leq|\xi|\leq \frac 8 3\}$.
For $u \in \mathcal{S}'$, $q\in {\mathbb Z}$, we define the Littlewood-Paley operators: $\dot{\Delta}_q{u}=\mathcal{F}^{-1}(\varphi(2^{-q}\cdot)\mathcal{F}u)$, ${\Delta}_q{u}=\dot{\Delta}_q{u}$ for $q\geq 0$, ${\Delta}_q{u}=0$ for $q\leq -2$ and $\Delta_{-1}u=\mathcal{F}^{-1}(\chi \mathcal{F}u)$, and $S_q{u}=\mathcal{F}^{-1}\big(\chi(2^{-q}\xi)\mathcal{F}u\big)$.
Here we use ${\mathcal{F}}(f)$ or $\widehat{f}$ to denote
the Fourier transform of $f$.

\begin{lemm}\cite{B.C.D}\label{le1}
Let $s>0$. Then there exists a constant $C$, depending only on $d,s$, such that for all $f,g\in H^s$,
\[\|fg\|_{H^{s}}\leq C\big(\|f\|_{L^\infty}||g||_{H^s}+||f||_{H^s}\|g\|_{L^\infty}\big).\]
\end{lemm}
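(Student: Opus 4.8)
The plan is to prove this classical Moser-type product estimate by Littlewood–Paley decomposition together with Bony's paraproduct calculus. The starting point is the characterization $\|u\|_{H^s}\approx \big\|(2^{js}\|\Delta_j u\|_{L^2})_{j\geq -1}\big\|_{\ell^2}$, which reduces the whole problem to dyadic estimates on the blocks $\Delta_j(fg)$ combined with Young's inequality for series. I would split the product through Bony's decomposition $fg = T_fg + T_gf + R(f,g)$, where $T_fg=\sum_q S_{q-1}f\,\Delta_q g$ is the paraproduct and $R(f,g)=\sum_{|q-q'|\le 1}\Delta_q f\,\Delta_{q'}g$ is the remainder, and then estimate the three pieces separately.

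For the paraproduct $T_fg$, I would exploit the spectral localization: each summand $S_{q-1}f\,\Delta_q g$ is frequency-supported in a fixed annulus of size $2^q$, so that $\Delta_j(T_fg)$ receives contributions only from indices $q$ with $|q-j|\le N_0$ for some fixed $N_0$. Bounding $\|S_{q-1}f\|_{L^\infty}\lesssim \|f\|_{L^\infty}$ uniformly in $q$ and pairing it with $\|\Delta_q g\|_{L^2}$, I obtain $2^{js}\|\Delta_j(T_fg)\|_{L^2}\lesssim \|f\|_{L^\infty}\sum_{|q-j|\le N_0}2^{qs}\|\Delta_q g\|_{L^2}$, and taking the $\ell^2_j$ norm yields $\|T_fg\|_{H^s}\lesssim \|f\|_{L^\infty}\|g\|_{H^s}$. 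The symmetric term satisfies $\|T_gf\|_{H^s}\lesssim \|g\|_{L^\infty}\|f\|_{H^s}$ by exchanging the roles of $f$ and $g$; both paraproduct bounds hold for every real $s$.

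The delicate piece is the remainder $R(f,g)$, and this is where the hypothesis $s>0$ becomes essential. Writing $R(f,g)=\sum_q \Delta_q f\,\widetilde\Delta_q g$ with $\widetilde\Delta_q=\Delta_{q-1}+\Delta_q+\Delta_{q+1}$, each summand is now frequency-supported in a ball of radius $\lesssim 2^q$ rather than an annulus, so $\Delta_j R(f,g)$ receives contributions from all $q\ge j-N_0$. Estimating $\|\Delta_q f\,\widetilde\Delta_q g\|_{L^2}\le \|\Delta_q f\|_{L^\infty}\|\widetilde\Delta_q g\|_{L^2}\lesssim \|f\|_{L^\infty}\|\widetilde\Delta_q g\|_{L^2}$, I arrive at
\[
2^{js}\|\Delta_j R(f,g)\|_{L^2}\lesssim \|f\|_{L^\infty}\sum_{q\ge j-N_0}2^{(j-q)s}\,2^{qs}\|\widetilde\Delta_q g\|_{L^2}.
\]
Because $s>0$, the kernel $2^{(j-q)s}\mathbf{1}_{q\ge j-N_0}$ is summable in the variable $j-q$, so the right-hand side is the convolution of an $\ell^1$ sequence with the $\ell^2$ sequence $(2^{qs}\|\widetilde\Delta_q g\|_{L^2})_q$; Young's inequality for series then gives $\|R(f,g)\|_{H^s}\lesssim \|f\|_{L^\infty}\|g\|_{H^s}$, and by symmetry also $\lesssim \|g\|_{L^\infty}\|f\|_{H^s}$. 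Summing the three contributions produces the claimed bound with a constant depending only on $d$ and $s$. The main obstacle is precisely this remainder summation: if $s\le 0$ the geometric factor $2^{(j-q)s}$ fails to decay as $q\to\infty$, the convolution kernel leaves $\ell^1$, and the series diverges, which is why the restriction $s>0$ is exactly what this argument needs.
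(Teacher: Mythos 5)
Your proof is correct and coincides with the argument the paper itself relies on: the paper states Lemma \ref{le1} without proof, citing \cite{B.C.D}, and your Bony-decomposition argument (paraproduct bounds $\|T_fg\|_{H^s}\lesssim\|f\|_{L^\infty}\|g\|_{H^s}$ valid for all real $s$, plus the remainder estimate where the ball-type spectral support forces a sum over $q\geq j-N_0$ and $s>0$ makes the kernel $2^{(j-q)s}$ summable for Young's inequality) is precisely the standard proof given in that reference. Your closing observation correctly identifies why $s>0$ is the exact threshold for this method.
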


\begin{lemm}\cite{L.Y}\label{le2}
Let $\sigma\in\mathbb{R}$ and $v$ be a vector field over $\mathbb{R}^d$. Assume that $\sigma>-d\min\{1-\frac{1}{p},\frac{1}{p}\}$. Define $R_j=[v\cdot\nabla,\Delta_j]f$. There a constant $C=C(p,\sigma,d)$ such that \\
\begin{equation*}
\big|\big|(2^{j\sigma}||R_j||_{L^2})_{j\geq-1}\big|\big|_{\ell^r}\leq
\begin{cases}
 C||\nabla v||_{H^{\frac d2}\cap L^\infty}||f||_{H^\sigma},\ \mathrm{if} \quad \sigma<1+\frac{d}{2},\\
 C||\nabla v||_{H^{\frac d2+1}}||f||_{H^{\sigma}},\quad \quad \mathrm{if} \quad \sigma=1+\frac{d}{2},\\
 C||\nabla v||_{H^{\sigma-1}}||f||_{H^\sigma}, \quad \mathrm{if} \quad \sigma>1+\frac{d}{2}.
\end{cases}\end{equation*}
Moreover, if $\sigma>0$, we have
\begin{equation*}
\big|\big|(2^{j\sigma}||R_j||_{L^2})_{j\geq-1}\big|\big|_{\ell^r}\leq C(||\na v||_{L^\infty}||f||_{H^\sigma}+||\na f||_{L^\infty}||\na v||_{H^{\sigma-1}}).
\end{equation*}
\end{lemm}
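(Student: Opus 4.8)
The plan is to reduce the commutator to the two standard building blocks of Littlewood--Paley calculus: Bony's paraproduct decomposition and a kernel-type commutator estimate. Writing $v\cd\na f=\sum_l v^l\pa_l f$ and using that each $\De_j$ commutes with $\pa_l$, I would decompose every product by Bony's formula $v^l\pa_l f=T_{v^l}\pa_l f+T_{\pa_l f}v^l+R(v^l,\pa_l f)$, where $T$ denotes the paraproduct and $R$ the remainder. Substituting into $R_j=v\cd\na\De_j f-\De_j(v\cd\na f)$ and regrouping, the commutator splits into three families: a \emph{genuine commutator} $I_j=\sum_l[T_{v^l},\De_j]\pa_l f$, a \emph{reverse paraproduct} family $II_j=\sum_l\big(T_{\pa_l\De_j f}v^l-\De_j T_{\pa_l f}v^l\big)$, and a \emph{remainder} family $III_j=\sum_l\big(R(v^l,\pa_l\De_j f)-\De_j R(v^l,\pa_l f)\big)$. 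The structural point is that each family has a definite frequency-interaction pattern, so that after forming $2^{j\sigma}\|\cd\|_{L^2}$ and summing one always meets a discrete convolution of an $\ell^r$ sequence $\{c_j\}$ against a fixed summable sequence, which Young's inequality controls; throughout I specialise to the case $p=2$, $r=2$ relevant to $H^s=B^s_{2,2}$, where the hypothesis reads $\sigma>-\frac d2$.

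For the genuine commutator $I_j$ I would pass to the kernel representation $\De_j g(x)=2^{jd}\int h(2^j(x-y))g(y)\,\dd y$ with $h=\mathcal F^{-1}\varphi$. Since the paraproduct forces the interacting blocks to satisfy $|k-j|\leq 4$, only terms $[S_{k-1}v^l,\De_j]\De_k\pa_l f$ with $k\sim j$ survive, and for each
\bbal
[S_{k-1}v^l,\De_j]\De_k\pa_l f(x)=2^{jd}\int h\big(2^j(x-y)\big)\big(S_{k-1}v^l(x)-S_{k-1}v^l(y)\big)\De_k\pa_l f(y)\,\dd y.
\end{align*}
The mean value theorem gives $|S_{k-1}v^l(x)-S_{k-1}v^l(y)|\leq|x-y|\,\|\na v\|_{L^\infty}$, and the extra factor $|x-y|$ is absorbed by the first moment of the kernel, yielding a gain of $2^{-j}$. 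Combined with the Bernstein bound $\|\De_k\pa_l f\|_{L^2}\lesssim 2^k\|\De_k f\|_{L^2}$ and $2^{-j}2^k\sim1$, this produces $2^{j\sigma}\|I_j\|_{L^2}\lesssim\|\na v\|_{L^\infty}\,c_j\|f\|_{H^\sigma}$ for \emph{every} $\sigma$; this is exactly the first term of the ``moreover'' estimate and, on its own, never needs more than $\|\na v\|_{L^\infty}$.

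The families $II_j$ and $III_j$ carry no commutator cancellation and are treated directly by Bernstein inequalities together with the spectral support of each block. In $II_j$ the low factor is a truncation $S_{m-1}\pa_l f$ or $S_{m-1}\pa_l\De_j f$, whose $L^\infty$ norm I would bound either by $\|\na f\|_{L^\infty}$ directly (giving the second term $\|\na f\|_{L^\infty}\|\na v\|_{H^{\sigma-1}}$, with $\ell^1$-summability of the resulting convolution kernel $2^{(j-m)\sigma}\mathbf 1_{m\geq j+2}$ requiring $\sigma>0$), or, when only Sobolev information on $v$ is available, through a Sobolev embedding. This is precisely where the three regimes separate: for $\sigma>1+\frac d2$ the embedding $H^{\sigma-1}\hookrightarrow L^\infty$ lets $\|\na v\|_{H^{\sigma-1}}$ close the estimate; at the borderline $\sigma=1+\frac d2$ the embedding $H^{d/2}\hookrightarrow L^\infty$ just fails, forcing the slightly stronger $\|\na v\|_{H^{d/2+1}}$; and for $\sigma<1+\frac d2$ one splits $\na v$ into low frequencies (handled by $L^\infty$) and high frequencies (handled by $H^{d/2}$), which is the origin of the $H^{d/2}\cap L^\infty$ norm. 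The remainder $III_j$ is the most forgiving, since its spectral support forces $j\lesssim k$, so summation against $2^{j\sigma}$ converges for all admissible $\sigma$; this is where the hypothesis $\sigma>-\frac d2$ (i.e. $\sigma>-d\min\{1-\frac1p,\frac1p\}$ at $p=2$) is consumed to guarantee $\ell^r$-summability.

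I expect the borderline case $\sigma=1+\frac d2$ inside the family $II_j$ to be the main obstacle, because there the naive use of $H^{d/2}\hookrightarrow L^\infty$ breaks down and one must either invoke the endpoint space $H^{d/2+1}$ or argue by a logarithmic refinement of the dyadic sum. The accompanying bookkeeping difficulty is to keep the weights $\{c_j\}$ genuinely $\ell^r$-summable, rather than merely bounded, through the several convolution sums generated by the low-high, high-low, and diagonal interactions.
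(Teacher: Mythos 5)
The paper itself gives no proof of this lemma---it is quoted from \cite{L.Y}, where (following Lemma 2.100 of \cite{B.C.D}) the argument is precisely the one you outline: Bony decomposition of the commutator into the genuine paraproduct commutator, treated via the kernel representation and a first-order Taylor estimate producing the $\|\nabla v\|_{L^\infty}$ factor, plus the reversed-paraproduct and remainder families treated by Bernstein inequalities and spectral localization. Your accounting of where each hypothesis is consumed ($\sigma>0$ for the $2^{(j-m)\sigma}$ convolution kernel in the reversed paraproduct, $\sigma>-\frac d2$ in the remainder, and the three regimes arising from the embedding $H^{\sigma-1}\hookrightarrow L^\infty$, its endpoint failure at $\sigma=1+\frac d2$, and the $H^{\frac d2}\cap L^\infty$ norm below that threshold) matches the standard proof, so your proposal is correct and takes essentially the same route as the cited source.
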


\section{Some useful estimates}

In this section, motivated by \cite{G.L.Y}, we will establish some useful estimates for smooth solutions of \eqref{MHD1}, which is the key component in the proof of Theorems \ref{th1}. The estimates can be stated as follows.

\begin{lemm}\label{le-0}
Let $d\geq 2$ and $s>1+\frac d2$. Suppose that $(u,b)\in \mathcal{C}([0,T];H^{s})$ is the solution of  \eqref{MHD1} with initial data $(u_0,b_0)$. Then, we have for all $t\in[0,T]$,
\bbal
||u(t)||^2_{H^s}+||b(t)||^2_{H^s}\leq (||u_0||^2_{H^s}+||b_0||^2_{H^s})e^{C\int^T_0(||u(t)||_{C^{0,1}}+||b(t)||_{C^{0,1}}+||b(t)||^2_{C^{0,1}})\dd t}.
\end{align*}
\end{lemm}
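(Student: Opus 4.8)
The plan is to derive an a priori $H^s$ energy estimate for the Hall-MHD system \eqref{MHD1} by applying the Littlewood-Paley operator $\Delta_j$ to each equation, taking the $L^2$ inner product with $\Delta_j u$ and $\Delta_j b$, summing over $j$ with weights $2^{2js}$, and then applying Gr\"onwall's inequality. First I would write the evolution of $\|\Delta_j u\|_{L^2}^2$ and $\|\Delta_j b\|_{L^2}^2$. Applying $\Delta_j$ to the first equation and pairing with $\Delta_j u$, the pressure term drops out because $\Div u=0$ and $\Delta_j$ commutes with the Leray projection, and the transport term is rewritten using the commutator $R_j=[u\cdot\nabla,\Delta_j]u$ since $\int \Delta_j(u\cdot\nabla u)\cdot \Delta_j u\,\dd x = \int R_j\cdot \Delta_j u\,\dd x$ once the divergence-free transport $\int u\cdot\nabla(\Delta_j u)\cdot\Delta_j u\,\dd x=0$ is used. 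The same manipulation applies to the $b$-equation, and the fractional diffusion $(-\Delta)^\alpha b$ contributes a good (nonnegative) term $\|(-\Delta)^{\alpha/2}\Delta_j b\|_{L^2}^2$ that I would simply discard.

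After this reduction, the task is to control the cross terms $b\cdot\nabla b$, $b\cdot\nabla u$, and the Hall term $\nabla\times((\nabla\times b)\times b)$. For the bilinear coupling terms $b\cdot\nabla b$ and $b\cdot\nabla u$, I would again isolate a commutator $[b\cdot\nabla,\Delta_j]$ (using $\Div b=0$ so the main transport piece vanishes after pairing) and estimate it via Lemma \ref{le2}; the relevant regularity is $\sigma=s>1+\frac d2$, so the third branch gives bounds of the form $2^{js}\|R_j\|_{L^2}=c_j(\|\nabla b\|_{H^{s-1}}\|u\|_{H^s}+\cdots)$, which after Cauchy-Schwarz and summation in $j$ yields the quadratic terms in $\|b\|_{C^{0,1}}$ and the products $\|u\|_{H^s},\|b\|_{H^s}$. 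The Hall term is the main obstacle: expanding $\nabla\times((\nabla\times b)\times b)$ produces the highest-order derivatives on $b$ (effectively a second-order operator), and one must exploit cancellation so that the top-order contribution is absorbed by the diffusion or controlled by $\|b\|_{C^{0,1}}\|b\|_{H^s}^2$ rather than $\|b\|_{H^s}^3$. This is precisely where the hypothesis $\alpha\geq1$ matters, since the diffusion must dominate the second-order Hall contribution; the commutator and product estimates (Lemma \ref{le1} and Lemma \ref{le2}) together with integration by parts to move one derivative off the highest-order factor are the tools I would use, arranging the estimate so that the worst term carries the coefficient $\|b\|_{C^{0,1}}$.

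Collecting all contributions, I expect to arrive at a differential inequality of the shape
\bbal
\frac{\dd}{\dd t}\big(\|u\|_{H^s}^2+\|b\|_{H^s}^2\big)\leq C\big(\|u\|_{C^{0,1}}+\|b\|_{C^{0,1}}+\|b\|_{C^{0,1}}^2\big)\big(\|u\|_{H^s}^2+\|b\|_{H^s}^2\big),
\end{align*}
where the Lipschitz norms $\|\cdot\|_{C^{0,1}}$ arise naturally as the $L^\infty$ factors in the product and commutator estimates (since $s-1>\frac d2$ guarantees $H^{s-1}\hookrightarrow L^\infty$, so $\nabla u,\nabla b\in L^\infty$). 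Gr\"onwall's lemma then gives the stated exponential bound. The delicate bookkeeping is making sure the Hall term genuinely reduces to the quadratic-in-$\|b\|_{C^{0,1}}$ coefficient and does not force a higher power of the $H^s$ norm; getting this cancellation right, rather than the mechanical summation in $j$, is the crux of the argument.
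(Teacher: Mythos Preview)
Your plan is correct and follows the paper's proof essentially line for line: localize with $\Delta_j$, pair with $(\Delta_j u,\Delta_j b)$, reduce the transport and coupling terms to commutators estimated via Lemma~\ref{le2}, and for the Hall term integrate by parts to obtain $\int \Delta_j\big((\nabla\times b)\times b\big)\cdot(\nabla\times\Delta_j b)\,\dd x$, write this as a commutator $[\Delta_j,b\times](\nabla\times b)$ (the remaining piece vanishes pointwise since $(b\times v)\cdot v=0$), and then use Young's inequality so that $2^{2j}\|\Delta_j b\|_{L^2}^2$ is absorbed by the diffusion $C_0 2^{2j\alpha}\|\Delta_j b\|_{L^2}^2$ --- this is exactly where $\alpha\geq 1$ enters and why the coefficient becomes $\|b\|_{C^{0,1}}^2$, so do not ``simply discard'' the dissipative term as you first suggested. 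After multiplying by $2^{2js}$, summing, and applying Gr\"onwall, you obtain the stated bound.
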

\begin{proof}
Now, we apply $\De_j$ to \eqref{MHD1}, and take the inner product with $(\De_ju,\De_jb)$ and integrate by parts to have
\bal\label{eq-li}
\frac12\frac{\dd}{\dd t}(||\De_ju||^2_{L^2}+||\De_jb||^2_{L^2})+C_02^{2j\alpha}||\De_j b||^2_{L^2}\leq \sum^5_{i=1}I_i,
\end{align}
where
\bbal
&I_1=-\int_{\R^d}[\De_j,u\cdot \na]u\cd \De_ju\ \dd x, \quad \quad I_2=-\int_{\R^d}[\De_j,u\cdot \na] b\cd \De_j b\ \dd x,\\
&I_3=\int_{\R^d}[\De_j,b\cdot \na] b\cd \De_j u\ \dd x, \quad \quad I_4=\int_{\R^d}[\De_j,b\cdot \na] u\cd \De_j b\ \dd x,\\
&I_5=\int_{\R^d}[\De_j,b\times](\nabla \times b)\cdot (\nabla\times \De_j b)\ \dd x.
\end{align*}
According to Lemmas \ref{le1}-\ref{le2}, it is easy to estimate
\bbal
&|I_1|\lesssim ||[\De_j,u\cdot \na]u||_{L^2}||\De_ju||_{L^2}\lesssim 2^{-2js}c^2_j||\na u||_{L^\infty}|| u||^2_{H^{s}},\\
&|I_2|\lesssim ||[\De_j,u\cdot \na]b||_{L^2}||\De_j b||_{L^2}\lesssim 2^{-2js}c^2_j\big(||\na u||_{L^\infty}||b||^2_{H^{s}}+||\na b||_{L^\infty}||u||_{H^{s}}||b||_{H^{s}}\big),\\
&|I_3|\lesssim ||[\De_j,b\cdot \na] b||_{L^2}||\De_j u||_{L^2}\lesssim 2^{-2js}c^2_j||\na b||_{L^\infty}|| u||_{H^{s}}|| b||_{H^{s}},\\
&|I_4|\lesssim ||[\De_j,b\cdot \na] u||_{L^2}||\De_j b||_{L^2}\lesssim 2^{-2js}c^2_j\big(||\na b||_{L^\infty}||u||_{H^{s}}|| b||_{H^{s}}+||\na u||_{L^\infty}||b||^2_{H^{s}}\big).
\end{align*}
By Lemma \ref{le2}, we can gain that for $j\geq 0$,
\bbal
|I_5|&\lesssim ||[\De_j,b\times](\nabla \times b)||_{L^2}\cdot(2^j||\De_jb||_{L^2})
\\&\leq C||[\De_j,b\times](\nabla \times b)||^2_{L^2}+\frac{C_0}{8}2^{2j\alpha}||\De_j b||^2_{L^2}
\\&\leq C2^{-2j(s-1)}c^2_j||\nabla b||^{2}_{L^\infty}||b||^2_{H^{s}}+\frac{C_0}{8}2^{2j\alpha}||\De_jb||^2_{L^2}.
\end{align*}
If $j=-1$, it is easy to deduce that
\bbal
|I_5|\lesssim ||b||_{C^{0,1}}||b||^2_{H^{s}}.
\end{align*}
Integrating \eqref{eq-li} over $[0,t]$, multiplying the inequality above by $2^{2j{s}}$ and summing over $j\geq -1$, we have
\bbal\label{eq3}
&||u(t)||^2_{H^{s}}+||b(t)||^2_{H^{s}}\leq ||u_0||^2_{H^{s}}+||b_0||^2_{H^{s}}\nonumber\\& \quad +\int^t_0(|| u||^2_{H^{s}}+|| b||^2_{H^{s}})(||u(\tau)||_{C^{0,1}}+||b(\tau)||_{C^{0,1}}+||b(\tau)||^2_{C^{0,1}})\dd \tau .
\end{align*}
This along with the Gronwall inequality yields the result of this lemma.
\end{proof}

\begin{lemm}\label{le-1}
Let $d\geq 2$ and $s>1+\frac d2$. Suppose that $(u^1,b^1)$ and $(u^2,b^2)$ are two smooth solutions of \eqref{MHD1} with initial data $(u^1_0,b^1_0)$ and $(u^2_0,b^2_0)$ respectively. Denote $\delta u=u^1-u^2$ and $\delta b=b^1-b^2$. Then, we have for all $t\in[0,T]$,
\bbal
||\delta u(t)||^2_{H^{s-1}}+||\delta b(t)||^2_{H^{s-1}}\leq (||\delta u_0||^2_{H^{s-1}}+||\delta b_0||^2_{H^{s-1}})e^{\mathrm{A}(t)},
\end{align*}
and
\begin{align*}
||\delta u(t)||^2_{H^{s}}&+||\delta b(t)||^2_{H^{s}}\leq \Big(||\delta u_0||^2_{H^{s}}+||\delta b_0||^2_{H^{s}}\\& \quad +C\int^t_0(||u^2||^2_{H^{s+1}}+||b^2||^2_{H^{s+1}})(||\delta u||^2_{H^{s-1}}+||\delta b||^2_{H^{s-1}}) \dd \tau\Big)e^{\mathrm{A}(t)},
\end{align*}
with
\bbal
\mathrm{A}(t)=C\int^t_0(1+||u^1||_{H^s}+||u^2||_{H^s}+||b^1||_{H^s}+||b^2||_{H^s}+||b^1||^2_{H^s}+||b^2||^2_{H^s})\dd \tau.
\end{align*}
\end{lemm}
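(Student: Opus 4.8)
The plan is to write down the system satisfied by $(\delta u,\delta b)=(u^1-u^2,b^1-b^2)$ and then to run a Littlewood--Paley energy argument of exactly the type used in Lemma \ref{le-0}, but carried out \emph{simultaneously} at the two levels $H^{s-1}$ and $H^s$. Subtracting the two copies of \eqref{MHD1}, I would organise every nonlinearity so that the transport is carried by the first solution while the genuine reaction terms fall on the second solution, namely $u^1\cd\na u^1-u^2\cd\na u^2=u^1\cd\na\delta u+\delta u\cd\na u^2$, and similarly for $b^1\cd\na b^1-b^2\cd\na b^2$, $u^1\cd\na b^1-u^2\cd\na b^2$ and $b^1\cd\na u^1-b^2\cd\na u^2$. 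For the Hall nonlinearity, writing $J^i=\na\times b^i$, I would use the splitting $(\na\times b^1)\times b^1-(\na\times b^2)\times b^2=(\na\times\delta b)\times b^1+J^2\times\delta b$, so that the factor carrying two derivatives of the difference, $\na\times\delta b$, is paired with $b^1$, whereas the factor carrying a derivative of the reference solution, $J^2$, is paired with the undifferentiated $\delta b$. The pressure $\na\delta P$ disappears in each $L^2$ pairing because $\Div\delta u=0$.

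For the first estimate I apply $\De_j$, pair in $L^2$ with $(\De_j\delta u,\De_j\delta b)$ and sum against $2^{2j(s-1)}$. The diffusion contributes $C_02^{2j\alpha}\|\De_j\delta b\|^2_{L^2}$. The transport-by-$u^1$ leading pieces integrate to zero by $\Div u^1=0$ (self-cancellation in each equation), and the magnetic coupling leading pieces $\int(b^1\cd\na\De_j\delta b)\cd\De_j\delta u+\int(b^1\cd\na\De_j\delta u)\cd\De_j\delta b=\int b^1\cd\na(\De_j\delta b\cd\De_j\delta u)\,\dd x=0$ cancel \emph{across} the two equations; what remains are the commutators $[\De_j,u^1\cd\na]$ and $[\De_j,b^1\cd\na]$, bounded by Lemma \ref{le2} (at $\sigma=s-1$) through $(\|u^1\|_{H^s}+\|b^1\|_{H^s})(\|\delta u\|_{H^{s-1}}+\|\delta b\|_{H^{s-1}})$. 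The reaction terms are handled by Lemma \ref{le1}, e.g. $\|\delta u\cd\na u^2\|_{H^{s-1}}\les\|u^2\|_{H^s}\|\delta u\|_{H^{s-1}}$, using $s-1>\frac d2$. For the Hall contribution the piece $(\na\times\delta b)\times b^1$ enjoys the algebraic cancellation $\big((\na\times\De_j\delta b)\times b^1\big)\cd(\na\times\De_j\delta b)=0$, so only the commutator $[\De_j,b^1\times](\na\times\delta b)$ survives, controlled by $\|b^1\|^2_{H^s}\|\delta b\|^2_{H^{s-1}}$, while $J^2\times\delta b$ is estimated by Lemma \ref{le1} as $\|b^2\|^2_{H^s}\|\delta b\|^2_{H^{s-1}}$. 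Using $\alpha\geq1$ to absorb the factor $2^j$ coming from $\na\times\De_j\delta b$ into the diffusion, I obtain a differential inequality whose Gronwall integration yields the first estimate with the stated $\mathrm{A}(t)$.

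The second estimate repeats this scheme with the weight $2^{2js}$. The transport commutators now obey, by the third case of Lemma \ref{le2} (valid since $\sigma=s>1+\frac d2$), a post-summation bound of the form $(\|u^1\|_{H^s}+\|b^1\|_{H^s})(\|\delta u\|_{H^s}+\|\delta b\|_{H^s})$, still feeding only into $\mathrm{A}(t)$. The genuinely new feature is that the reaction terms and the Hall piece $J^2\times\delta b$ each lose one derivative: Lemma \ref{le1} gives $\|\delta u\cd\na u^2\|_{H^s}\les\|\delta u\|_{L^\infty}\|u^2\|_{H^{s+1}}+\|\delta u\|_{H^s}\|u^2\|_{H^s}$ and $\|J^2\times\delta b\|_{H^s}\les\|\delta b\|_{L^\infty}\|b^2\|_{H^{s+1}}+\|\delta b\|_{H^s}\|b^2\|_{H^s}$. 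I then bound $\|\delta u\|_{L^\infty},\|\delta b\|_{L^\infty}\les\|\delta u\|_{H^{s-1}},\|\delta b\|_{H^{s-1}}$ (again $s-1>\frac d2$) and apply Young's inequality to $\|u^2\|_{H^{s+1}}\|\delta u\|_{H^{s-1}}\cd\|\delta u\|_{H^s}$, splitting each such product into $\frac12\|u^2\|^2_{H^{s+1}}\|\delta u\|^2_{H^{s-1}}$, which becomes the extra integral source term, and $\frac12\|\delta u\|^2_{H^s}$, which is absorbed into the $\mathrm{A}(t)$-coefficient. Gronwall's lemma then delivers the second estimate.

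The main obstacle is the Hall term, which is quadratic and second order in $b$ and would naively cost two derivatives of the difference. The decisive points are the pointwise cancellation $\big((\na\times\De_j\delta b)\times b^1\big)\cd(\na\times\De_j\delta b)=0$, which removes the top-order part of $(\na\times\delta b)\times b^1$ and leaves only a commutator, together with the hypothesis $\alpha\geq1$, which makes $2^{2j\alpha}\|\De_j\delta b\|^2_{L^2}$ strong enough to absorb the surviving first-order factor $2^j\|\De_j\delta b\|_{L^2}$. Routing $\na\times\delta b$ against $b^1$ and $J^2$ against $\delta b$ guarantees that the unavoidable derivative loss is always paid by the \emph{second} solution, producing exactly the term $C\int^t_0(\|u^2\|^2_{H^{s+1}}+\|b^2\|^2_{H^{s+1}})(\|\delta u\|^2_{H^{s-1}}+\|\delta b\|^2_{H^{s-1}})\,\dd\tau$ appearing in the statement.
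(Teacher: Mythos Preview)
Your proposal is correct and follows essentially the same route as the paper: you write the same difference system, exploit the same cancellations (self-cancellation of the $u^1$-transport, cross-cancellation of the $b^1\cdot\nabla$ couplings, and the pointwise identity $((\nabla\times\Delta_j\delta b)\times b^1)\cdot(\nabla\times\Delta_j\delta b)=0$ for the Hall term), and estimate the resulting commutators and reaction terms with Lemmas \ref{le1}--\ref{le2}, absorbing the stray $2^j$ into the diffusion via $\alpha\geq1$. Your treatment of the $H^s$ level, isolating the derivative loss onto $u^2,b^2$ through Lemma~\ref{le1} and Young's inequality, also matches the paper's argument.
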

\begin{proof}
It is easy to show that
\beq\label{MHD1-1}\bca
\partial_t\delta u+u^1\cdot\nabla \delta u+\delta u\cd\nabla u^2+\nabla \bar{P}=b^1\cdot \nabla \delta b+\delta b\cd \na b^2, \\
\partial_t\delta b+u^1\cdot \nabla \delta b+\delta u\cd \na b^2+(-\De)^{\alpha} \delta b+\nabla\times((\nabla\times \delta b)\times b^1)\\ \qquad
+\nabla\times((\nabla\times b^2)\times \delta b)
=b^1\cdot\nabla \delta u+\delta b\cd \na u^2,\\
\mathrm{div}\delta u=\mathrm{div}\delta b=0,\quad (\delta u,\delta b)|_{t=0}=(\delta u_0,\delta b_0).
\eca\eeq
Now, we apply $\De_j$ to \eqref{MHD1-1}, and take the inner product with $(\De_j\delta u,\De_j\delta b)$ and integrate by parts to have
\bal\label{eq1}
\frac12\frac{\dd}{\dd t}(||\De_j\delta u||^2_{L^2}+||\De_j\delta b||^2_{L^2})+C_02^{2j\alpha}||\De_j\delta b||^2_{L^2}\leq \sum^8_{i=1}J_i,
\end{align}
where
\bbal
&J_1=-\int_{\R^d}[\De_j,u^1\cdot \na]\delta u\cd \De_j\delta u\ \dd x, \quad \quad J_2=-\int_{\R^d}[\De_j,u^1\cdot \na]\delta b\cd \De_j\delta b\ \dd x,\\
&J_3=\int_{\R^d}[\De_j,b^1\cdot \na]\delta b\cd \De_j\delta u\ \dd x, \quad \quad J_4=\int_{\R^d}[\De_j,b^1\cdot \na]\delta u\cd \De_j\delta b\ \dd x,\\
&J_5=\int_{\R^d}\De_j(\delta b\cd \na b^2-\delta u\cd\nabla u^2)\De_j\delta u\ \dd x,\\
&J_6=\int_{\R^d}\De_j(\delta b\cd \na u^2-\delta u\cd\nabla b^2)\De_j\delta b\ \dd x,\\
&J_7=\int_{\R^d}[\De_j,b^1\times](\nabla \times \delta b)\cdot (\nabla\times \De_j\delta b)\ \dd x,\\
&J_8=-\int_{\R^d}\De_j((\nabla \times b^2)\times \delta b)\cdot (\nabla\times \De_j\delta b)\ \dd x.
\end{align*}
On the one hand, according to Lemmas \ref{le1}-\ref{le2}, it is easy to estimate
\begin{align}\label{eq-z1}\begin{split}
&|J_1|\lesssim ||[\De_j,u^1\cdot \na]\delta u||_{L^2}||\De_j\delta u||_{L^2}\lesssim 2^{-2j(s-1)}c^2_j||\na u^1||_{H^{s-1}}||\delta u||^2_{H^{s-1}},\\
&|J_2|\lesssim ||[\De_j,u^1\cdot \na]\delta b||_{L^2}||\De_j\delta b||_{L^2}\lesssim 2^{-2j(s-1)}c^2_j||\na u^1||_{H^{s-1}}||\delta b||^2_{H^{s-1}},\\
&|J_3|\lesssim ||[\De_j,b^1\cdot \na]\delta b||_{L^2}||\De_j\delta u||_{L^2}\lesssim 2^{-2j(s-1)}c^2_j||\na b^1||_{H^{s-1}}||\delta u||_{H^{s-1}}||\delta b||_{H^{s-1}},\\
&|J_4|\lesssim ||[\De_j,b^1\cdot \na]\delta u||_{L^2}||\De_j\delta b||_{L^2}\lesssim 2^{-2j(s-1)}c^2_j||\na b^1||_{H^{s-1}}||\delta u||_{H^{s-1}}||\delta b||_{H^{s-1}},\\
&|J_5|\lesssim 2^{-2j(s-1)}c^2_j(||b^2||_{H^{s}}||\delta u||_{H^{s-1}}||\delta b||_{H^{s-1}}+||u^2||_{H^s}||\delta u||^2_{H^{s-1}}),\\
&|J_6|\lesssim 2^{-2j(s-1)}c^2_j(||b^2||_{H^{s}}||\delta u||_{H^{s-1}}||\delta b||_{H^{s-1}}+||u^2||_{H^s}||\delta b||^2_{H^{s-1}}).
\end{split}\end{align}
By Lemma \ref{le2}, we obtain for $j\geq 0$,
\bal\label{eq-z2}\begin{split}
|J_7|&\lesssim ||[\De_j,b^1\times](\nabla \times \delta b)||_{L^2}\cdot(2^j||\De_j\delta b||_{L^2})
\\&\leq C||[\De_j,b^1\times](\nabla \times \delta b)||^2_{L^2}+\frac{C_0}{8}2^{2j\alpha}||\De_j\delta b||^2_{L^2}
\\&\leq C2^{-2j(s-1)}c^2_j||b^1||^{2}_{H^s}||\delta b||^2_{H^{s-1}}+\frac{C_0}{8}2^{2j\alpha}||\De_j\delta b||^2_{L^2}.
\end{split}\end{align}
By Lemma \ref{le1}, we have for $j\geq 0$,
\bal\label{eq-z3}\begin{split}
|J_8|&\lesssim ||\De_j((\nabla \times b^2)\times \delta b)||_{L^2}\cdot(2^j||\De_j\delta b||_{L^2})
\\&\leq C||\De_j((\nabla \times b^2)\times \delta b)||^2_{L^2}+\frac{C_0}{8}2^{2j\alpha}||\De_j\delta b||^2_{L^2}
\\&\leq C2^{-2j(s-1)}c^2_j||b^2||^{2}_{H^s}||\delta b||^2_{H^{s-1}}+\frac{C_0}{8}2^{2j\alpha}||\De_j\delta b||^2_{L^2}.
\end{split}\end{align}
If $j=-1$, it is easy to deduce that
\bal\label{eq-z4}
|J_7|+|J_8|\lesssim (||b^1||_{H^s}+||b^2||_{H^s})||\delta b||^2_{H^{s-1}}.
\end{align}
Integrating \eqref{eq1} over $[0,t]$, multiplying the inequality above by $2^{2j(s-1)}$ and summing over $j\geq -1$, we infer from \eqref{eq-z1}-\eqref{eq-z4} that
\bbal
||\delta u||^2_{H^{s-1}}+||\delta b||^2_{H^{s-1}}&\leq ||\delta u_0||^2_{H^{s-1}}+||\delta b_0||^2_{H^{s-1}}\nonumber\\& \quad +\int^t_0\mathrm{A}'(\tau)(||\delta u||^2_{H^{s-1}}+||\delta b||^2_{H^{s-1}})\dd \tau.
\end{align*}
This along with the Gronwall inequality yields the first part of the lemma. On the other hand, according to Lemmas \ref{le1}-\ref{le2}, it is easy to estimate
\bal\label{eq-z5}\begin{split}
&|J_1|\lesssim ||[\De_j,u^1\cdot \na]\delta u||_{L^2}||\De_j\delta u||_{L^2}\lesssim 2^{-2js}c^2_j||\na u^1||_{H^{s-1}}||\delta u||^2_{H^{s}},\\
&|J_2|\lesssim ||[\De_j,u^1\cdot \na]\delta b||_{L^2}||\De_j\delta b||_{L^2}\lesssim 2^{-2js}c^2_j||\na u^1||_{H^{s-1}}||\delta b||^2_{H^{s}},\\
&|J_3|\lesssim ||[\De_j,b^1\cdot \na]\delta b||_{L^2}||\De_j\delta u||_{L^2}\lesssim 2^{-2js}c^2_j||\na b^1||_{H^{s-1}}||\delta u||_{H^{s}}||\delta b||_{H^{s}},\\
&|J_4|\lesssim ||[\De_j,b^1\cdot \na]\delta u||_{L^2}||\De_j\delta b||_{L^2}\lesssim 2^{-2js}c^2_j||\na b^1||_{H^{s-1}}||\delta u||_{H^{s}}||\delta b||_{H^{s}},\\
&|J_5|\lesssim 2^{-2js}c^2_j(||b^2||_{H^{s}}||\delta u||_{H^{s}}||\delta b||_{H^{s}}+||u^2||_{H^s}||\delta u||^2_{H^{s}}\\& \quad \quad
\quad +||b^2||_{H^{s+1}}||\delta u||_{H^{s}}||\delta b||_{H^{s-1}}+||u^2||_{H^{s+1}}||\delta u||_{H^{s}}||\delta u||_{H^{s-1}}),\\
&|J_6|\lesssim 2^{-2js}c^2_j(||b^2||_{H^{s}}||\delta u||_{H^{s}}||\delta b||_{H^{s}}+||u^2||_{H^s}||\delta b||^2_{H^{s}}\\& \quad \quad
\quad +||b^2||_{H^{s+1}}||\delta u||_{H^{s-1}}||\delta b||_{H^{s}}+||u^2||_{H^{s+1}}||\delta b||_{H^{s}}||\delta b||_{H^{s-1}}).
\end{split}\end{align}
By Lemma \ref{le2}, we can gain for $j\geq 0$,
\bal\label{eq-z6}\begin{split}
|J_7|&\lesssim ||[\De_j,b^1\times](\nabla \times \delta b)||_{L^2}\cdot(2^j||\De_j\delta b||_{L^2})
\\&\leq C||[\De_j,b^1\times](\nabla \times \delta b)||^2_{L^2}+\frac{C_0}{8}2^{2j\alpha}||\De_j\delta b||^2_{L^2}
\\&\leq C2^{-2j(s-1)}c^2_j||b^1||^{2}_{H^s}||\delta b||^2_{H^{s}}+\frac{C_0}{8}2^{2j\alpha}||\De_j\delta b||^2_{L^2}.
\end{split}\end{align}
By Lemma \ref{le1}, we have for $j\geq 0$,
\bal\label{eq-z7}\begin{split}
|J_8|&\lesssim ||\De_j((\nabla \times b^2)\times \delta b)||_{L^2}\cdot(2^j||\De_j\delta b||_{L^2})
\\&\leq C||\De_j((\nabla \times b^2)\times \delta b)||^2_{L^2}+\frac{C_0}{8}2^{2j\alpha}||\De_j\delta b||^2_{L^2}
\\&\leq C2^{-2js}c^2_j\big(||b^2||^{2}_{H^{s+1}}||\delta b||^2_{H^{s-1}}+||b^2||^{2}_{H^{s}}||\delta b||^2_{H^s}\big)+\frac{C_0}{8}2^{2j\alpha}||\De_j\delta b||^2_{L^2}.
\end{split}\end{align}
If $j=-1$, by \eqref{eq-z4}, we have
\bal\label{eq-z8}
|J_7|+|J_8|\lesssim (||b^1||_{H^s}+||b^2||_{H^s})||\delta b||^2_{H^{s}}.
\end{align}
Integrating \eqref{eq1} over $[0,t]$, multiplying the inequality above by $2^{2j{s}}$ and summing over $j\geq -1$, we infer from \eqref{eq-z5}-\eqref{eq-z8} that
\bal\label{eq3}
||\delta u||^2_{H^{s}}+||\delta b||^2_{H^{s}}&\leq ||\delta u_0||^2_{H^{s}}+||\delta b_0||^2_{H^{s}}\nonumber\\& \quad +\int^t_0\mathrm{A}'(\tau)(||\delta u||^2_{H^{s}}+||\delta b||^2_{H^{s}})\dd \tau \nonumber
\\& \quad +C\int^t_0(||u^2||^2_{H^{s+1}}+||b^2||^2_{H^{s+1}})(||\delta u||^2_{H^{s-1}}+||\delta b||^2_{H^{s-1}}) \dd \tau.
\end{align}
This along with the Gronwall inequality yields the second part of the lemma. Therefore, we complete the proof of this lemma.
\end{proof}

\section{Proof of Theorem 1.1}
In this section, we will give the details of the proof for the Theorem 1.1.

\noindent\textbf{Proof of Theorem 1.1.} First, according to classical results, there exist a positive $T_n>0$ such that \eqref{MHD2} have a solution $(u^n,b^n)\in \mathcal{C}([0,T_n);H^s)$. Indeed, by Lemma \ref{le-0}, we have
\bbal
||u^n||^2_{H^s}+||b^n||^2_{H^s}\leq (||u^n_0||^2_{H^s}+||b^n_0||^2_{H^s})
e^{C\int^t_0(||u^n||_{C^{0,1}}+||b^n||_{C^{0,1}}+||b^n||^2_{C^{0,1}})\dd \tau}.
\end{align*}
Denote $R=\sup\limits_{n\geq 0}(||u^n_0||_{H^s}+||b^n_0||_{H^s})$. Therefore, by continuity arguments, there exists a positive $T=T(s,d,R)$ satisfying $T_n>T$ such that
\bbal
||u^n||^2_{L^\infty_T(H^s)}+||b^n||^2_{L^\infty_T(H^s)}\leq C(||u^n_0||^2_{H^s}+||b^n_0||^2_{H^s})\leq C.
\end{align*}
Moreover, for all $\gamma>s$, we have for all $t\in[0,T]$,
\bbal
||u^n(t)||^2_{H^\gamma}+||b^n(t)||^2_{H^\gamma}&\leq (||u^n_0||^2_{H^\gamma}+||b^n_0||^2_{H^\gamma})
e^{C\int^t_0(||u^n||_{C^{0,1}}+||b^n||_{C^{0,1}})\dd \tau}\\&\leq C(||u^n_0||^2_{H^\gamma}+||b^n_0||^2_{H^\gamma}).
\end{align*}
Let $(u^n_j,b^n_j)\in \mathcal{C}([0,T];H^s)$ be the solution of
\beq\begin{cases}
\partial_tu^n_j+u^n_j\cdot\nabla u^n_j+\nabla P_{n,j}=b^n_j\cdot \nabla b^n_j, \\
\partial_tb^n_j+u^n_j\cdot \nabla b^n_j+\nabla \times((\nabla \times b^n_j)\times b^n_j)+(-\De)^{\alpha} b^n_j=b^n_j\cdot\nabla u^n_j,\\
\mathrm{div} u^n_j=\mathrm{div} b^n_j=0,\quad (u^n_j,b^n_j)|_{t=0}=S_j(u^n_0,b^n_0).
\end{cases}\eeq
Then, according to Lemma \ref{le-1}, we have
\bbal
&||u^n_j-u^n||^2_{L^\infty_T(H^{s-1})}+||b^n_j-b^n||^2_{L^\infty_T(H^{s-1})}\\&
\leq  C(||(\mathrm{Id}-S_j)u^n_0||^2_{H^{s-1}}+||(\mathrm{Id}-S_j)b^n_0||^2_{H^{s-1}}),
\end{align*}
which along with the fact that $||u^n_j||_{L^\infty_T(H^{s+1})}+||b^n_j||_{L^\infty_T(H^{s+1})}\leq C2^j$ leads to
\bal\label{eq-w1}\begin{split}
& \quad \ ||u^n_j-u^n||^2_{L^\infty_T(H^{s})}+||b^n_j-b^n||^2_{L^\infty_T(H^{s})}\\&\leq C(||(\mathrm{Id}-S_j)u^n_0||^2_{H^{s}}+||(\mathrm{Id}-S_j)b^n_0||^2_{H^{s}}\\& \quad \ +\int^t_0(||u^n_j||^2_{H^{s+1}}+||b^n_j||^2_{H^{s+1}})(||u^n_j-u^n||^2_{H^{s-1}}+||b^n_j-b^n||^2_{H^{s-1}}) \dd \tau)
\\&\leq C(||(\mathrm{Id}-S_j)u^n_0||^2_{H^{s}}+||(\mathrm{Id}-S_j)b^n_0||^2_{H^{s}}
\\& \quad \ +2^{2j}||(\mathrm{Id}-S_j)u^n_0||^2_{H^{s-1}}+2^{2j}||(\mathrm{Id}-S_j)b^n_0||^2_{H^{s-1}})
\\& \leq C(||(\mathrm{Id}-S_j)u^n_0||^2_{H^{s}}+||(\mathrm{Id}-S_j)b^n_0||^2_{H^{s}}).
\end{split}\end{align}
Using Lemma \ref{le-1}, we can obtain
\bal\label{eq-w2}
||u^n_j-u_j||^2_{L^\infty_T(H^{s})}+||b^n_j-b_j||^2_{L^\infty_T(H^{s})}\leq C2^{2j}(||u^n_0-u_0||^2_{H^s}+||b^n_0-b_0||^2_{H^s}).
\end{align}
Therefore, combing \eqref{eq-w1}-\eqref{eq-w2}, we obtain
\bbal
& \quad \ ||u^n-u||^2_{L^\infty_T(H^s)}+||b^n-b||^2_{L^\infty_T(H^s)}\\&\leq ||u^n_j-u_j||^2_{L^\infty_T(H^s)}+||b^n_j-b_j||^2_{L^\infty_T(H^s)}
\\&\quad  +||u^n_j-u^n||^2_{L^\infty_T(H^s)}+||b^n_j-b^n||^2_{L^\infty_T(H^s)}
\\&\quad +||u_j-u||^2_{L^\infty_T(H^s)}+||b_j-b||^2_{L^\infty_T(H^s)}
\\& \leq  C(||(\mathrm{Id}-S_j)u^n_0||^2_{H^{s}}+||(\mathrm{Id}-S_j)b^n_0||^2_{H^{s}}+||(\mathrm{Id}-S_j)u_0||^2_{H^{s}}\\& \quad \ +||(\mathrm{Id}-S_j)b_0||^2_{H^{s}}+
2^{2j}||u^n_0-u_0||^2_{H^s}+2^{2j}||b^n_0-b_0||^2_{H^s})
\\& \leq  C(||(\mathrm{Id}-S_j)u_0||^2_{H^{s}}+||(\mathrm{Id}-S_j)b_0||^2_{H^{s}}\\& \quad \ +
2^{2j}||u^n_0-u_0||^2_{H^s}+2^{2j}||b^n_0-b_0||^2_{H^s}).
\end{align*}
This implies the result of Theorem 1.1. \\

\vspace*{1em}
\noindent\textbf{Acknowledgements.} This work was partially supported by NSFC (No. 11361004).
%\vspace*{1em}

\end{document}